

\documentclass[preprint,12pt]{elsarticle}






\usepackage{amsmath,amssymb, amscd, amsthm}
\usepackage{accents}
\usepackage{setspace}
\usepackage{fancyhdr}
\usepackage{graphicx}
\usepackage{subfigure}
\usepackage{psfrag}
\usepackage{titlesec}
\usepackage{epstopdf}
\usepackage{textcomp}
\usepackage{xspace}
\usepackage{booktabs}
\usepackage{multirow}
\usepackage{tabu}
\usepackage{enumerate}
\usepackage{tikz}
\usepackage{tikz-cd}
\usepackage{xcolor}

\newtheorem{thm}{Theorem}[section]

\newtheorem{cor}[thm]{Corollary}
\newtheorem{lem}[thm]{Lemma}
\newtheorem{prop}[thm]{Proposition}

\newtheorem{defn}[thm]{Definition}

\newproof{pf}{Proof}

\newcommand{\Hom}{\text{Hom}}
\newcommand{\coker}{\text{coker }}
\newcommand{\Ass}{\text{Ass}}

\newcommand{\pd}{\text{pd}}

\newcommand{\Ext}{\text{Ext}}

\newcommand{\codim}{\text{codim}}

\newcommand{\Ann}{\text{Ann}}

\providecommand{\selectlanguage}[1]{\relax}

\newcommand{\minus}{\smallsetminus}

\journal{Journal of Pure and Applied Algebra}

\begin{document}

\begin{frontmatter}



\title{An extension of a theorem \\ of Frobenius and Stickelberger \\ to modules of projective dimension one \\ over a factorial domain}%


\author[label1]{Joseph P. Brennan\corref{cor1}}
\ead{joseph.brennan@ucf.edu}

\author[label1]{Alexander York}
\ead{a.york@ucf.edu}

\address[label1]{Department of Mathematics, University of Central Florida\\ 4393 Andromeda Loop N, Orlando, FL 32816}
\cortext[cor1]{Corresponding author}

\begin{abstract}
Let $R$ be a Noetherian ring.   A quasi-Gorenstein \(R\)-module is an \(R\)-module such that the grade of the module and the projective dimension of the module are equal and the canonical module of the module is isomorphic to the module itself.  After discussing properties of finitely generated quasi-Gorenstein modules, it is  shown that this definition allows for a characterization of diagonal matrices of maximal rank over a commutative Noetherian factorial domain \(R\) extending a theorem of Frobenius and Stickelberger to modules of projective dimension 1 over a commutative Noetherian factorial domain.
\end{abstract}

\begin{keyword}

quasi-Gorenstein modules \sep diagonalizable matrices 


\MSC[2010] 13C05 (primary)\sep 13D07, 13F15, (secondary).
\end{keyword}

\end{frontmatter}



\section{Introduction}

This paper brings together two threads in the theory of modules that have not experienced extensive development in recent times.  The first of which is the study of diagonalization of matrices under equivalence over arbitrary commutative rings.  Frobenius and Stickelberger in their famous paper \cite{Frobenius1879} classified finitely generated modules over Abelian groups and an extension of this classifcation theorem to modules over principal ideal domains \cite{Jacobson} brings us to the end of that development.  Any further attempts have been thwarted by some basic ideas and properties in more general commutative rings.  One of the simplest examples of an obstruction to extending the results for modules of projective dimension one is the following matrix over \(\mathbb{Z}[x]\)

\begin{align}
\begin{pmatrix}
2 & x \\
0 & 3
\end{pmatrix}.
\end{align}

It is easily seen that this matrix is not equivalent to a diagonal matrix.  As \(\mathbb{Z}[x]\) is usually the first ring looked at after a principal ideal domain is considered, the hope for a general theorem to extend diagonlization to all matrices has stopped here.\newline

The second thread is the question of when a matrix over a commutative ring is equivalent to its transpose.  The theorem of Frobenius and Stickelberger \cite{Frobenius1879} demonstrates that this property holds over \(\mathbb{Z}\) and the classical extension to principal ideal domains shows that principal ideal domains have this property as well.  Alas, the matrix above is also an obstruction to an extension of this result to more general commutative rings.  It can easily be seen that over \(\mathbb{Z}[x]\) the transpose of \((1.1)\) is not equivalent to \((1.1).\)\newline

So the questions then become "What matrices are diagonalizable in more general rings?" and "What matrices are equivalent to their transpose in more general rings?".  The second question can easily be answered, but the first requires a bit more finesse.\newline

In the rest of the paper all modules are assumed to be finitely generated over \(R\) and \(R\) will be a Noetherian factorial domain.  The principal reason for restricting our interest to such rings is that in a factorial domain every prime ideal of codimension 1 is a principal ideal, and we can compare associated primes of modules with their Ext modules in a relatively simple way, [Prop 3.2.2 in \cite{VasCMCAAG}].\newline

The main result of this paper deals with the first question posed above, however, the second question leads to some interesting definitions and ideas used extensively for the first.  We can see this if we consider a module, \(M,\) of projective dimension one presented by a \(n\times n\)-matrix \(m.\) It has a resolution

\[
\begin{tikzcd}
0 \arrow[r] & R^n \arrow[r, "m"] & R^n \arrow[r] & M \arrow[r] & 0.
\end{tikzcd}
\]

To introduce the transpose matrix \(m^T\) of \(m\) we apply \(\Hom_R(-,R)\) to the sequence to obtain

\[
\begin{tikzcd}
0 \arrow[r] & \Hom_R(M,R) \arrow[r] & R^n \arrow[r, "m^T"] & R^n \arrow[r] & \Ext_R^1(M,R) \arrow[r] & 0
\end{tikzcd}
\]

It is then clear that if \(m\) is equivalent to \(m^T\) we get a commutative diagram of the form

\[
\begin{tikzcd}
& 0 \arrow[r] & R^n \arrow[r, "m"] \arrow[d, "\cong"'] & R^n \arrow[r] \arrow[d, "\cong"'] & M \arrow[r] & 0 \\
0 \arrow[r] & \Hom_R(M,R) \arrow[r] & R^n \arrow[r, "m^T"] & R^n \arrow[r] & \Ext_R^1(M,R) \arrow[r] & 0
\end{tikzcd}
\]

\noindent which shows that \(\Hom_R(M,R)=0\) and \(M\cong \Ext_R^1(M,R).\)  This indicates the importance of such modules M, which are known in the literature as quasi-Gorenstein modules.\newline

Returning to the first question, in order for an \(R\)-module \(M\) to be presented by a diagonal matrix and hence be a direct sum of cyclic $R$-modules with principal annihilators, the following are necessary conditions:
\bigskip

\begin{itemize}
\item[\((a)\)] \(M\) must be a module of projective dimension 1.
\bigskip
\item[\((b)\)] The matrix presenting \(M\) is equivalent to its transpose (as in the diagrams above).
\bigskip
\item[\((c)\)] \(M\) must be filtered by \textit{quasi-Gorenstein} submodules whose quotients are cyclic and have principal annihilators.\end{itemize}

\bigskip

The reader should note that none of these conditions is sufficient. Matrix \((1.1)\) shows that \((a)\) is not sufficient.  To see that \((b)\) and \((c)\) are not sufficient consider the following matrix over \(k[x,y]:\)

\begin{align}
m=\begin{pmatrix}
x & y \\ 0 & x
\end{pmatrix}
\end{align}

This matrix is of full rank and it can be shown that this matrix satisfies \((b).\)  The associated prime of the module \(M=\coker\!(m)\) is \((x)\) and \(\text{Ann}(M)=(x^2).\)  The two filtrations

\[
\begin{tikzcd}
0 \arrow[r] & k[x,y]/(x) \arrow[r] & M \arrow[r] & k[x,y]/(y) \arrow[r] & 0\\
0 \arrow[r] & k[x,y]/(x^2) \arrow[r] & M \arrow[r] & k \arrow[r] & 0
\end{tikzcd}
\]

\noindent show that \(M\) has no filtration of the correct form.

The following theorem characterizes diagonalizable matrices.

\begin{thm}\label{Rthm1} Let \(R\) be a commutative factorial domain and \(M\) an \(R\)-module of projective dimension one presented by a full rank \(n\times n\) matrix \(m\) then the following conditions are equivalent:

\bigskip

\begin{itemize}
\item[(i)] The module \(M\) is quasi-Gorenstein and has a minimal cyclic-filtration consisting of quasi-Gorenstein submodules.
\bigskip
\item[(ii)] The matrix \(m\) is equivalent to \(m^T\) and the module \(M\) has a minimal cyclic-filtration consisting of quasi-Gorenstein submodules.
\bigskip
\item[(iii)] \(M\cong \bigoplus\limits_{i=1}^nR/(\lambda_i)\), \(\lambda_i\in R.\)
\bigskip
\item[(iv)] The matrix \(m\) is equivalent to a diagonal matrix.
\end{itemize}
\end{thm}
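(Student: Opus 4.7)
My plan is to establish the equivalences via the cycle (iii) $\Leftrightarrow$ (iv), (iii) $\Rightarrow$ (i), (i) $\Rightarrow$ (ii), (ii) $\Rightarrow$ (iii). The first equivalence is a direct translation: $\text{diag}(\lambda_1,\ldots,\lambda_n)$ presents $\bigoplus_{i=1}^n R/(\lambda_i)$, and full rank forces each $\lambda_i\neq 0$. For (iii) $\Rightarrow$ (i), each $R/(\lambda_i)$ is quasi-Gorenstein via the resolution $0\to R\xrightarrow{\lambda_i}R\to R/(\lambda_i)\to 0$, which yields $\Ext_R^1(R/(\lambda_i),R)\cong R/(\lambda_i)$; finite direct sums of quasi-Gorenstein modules of a common grade are quasi-Gorenstein since $\Ext$ commutes with direct sums, and the partial-sum filtration realizes the desired cyclic-filtration by quasi-Gorenstein submodules.

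For (i) $\Rightarrow$ (ii) I would reuse the diagram from the introduction: if $M\cong\Ext_R^1(M,R)$, then the two length-one free resolutions---one with differential $m$, the other with $m^T$---resolve isomorphic modules, so by the lifting property of projective resolutions there exist invertible $P,Q$ with $m^T=Q^{-1}mP$. The cyclic-filtration hypothesis carries across unchanged.

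The heart of the argument is (ii) $\Rightarrow$ (iii), which I would prove by induction on $n$. Note first that (ii) also forces $M$ to be quasi-Gorenstein by reading the introduction's diagram in the other direction. The base case $n=1$ is immediate. For the inductive step, the filtration $0=M_0\subset\cdots\subset M_n=M$ begins with $M_1$ cyclic and quasi-Gorenstein; since $M_1$ has grade one and $R$ is factorial, cyclic plus quasi-Gorenstein forces $M_1\cong R/(\lambda_1)$ for some principal $(\lambda_1)$. The crucial step is to split the sequence $0\to M_1\to M\to M/M_1\to 0$. Applying $\Hom_R(-,R)$ and invoking $M\cong\Ext_R^1(M,R)$, $M_1\cong\Ext_R^1(M_1,R)$, together with a verification that $M/M_1$ is itself quasi-Gorenstein (its projective dimension is forced to equal one by comparing the long exact $\Ext$ sequence with the fact that both $M$ and $M_1$ have projective dimension one), yields a candidate retraction $M\to M_1$.

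The main obstacle will be verifying that this retraction restricts to the identity on $M_1\hookrightarrow M$; this requires tracking the naturality of the self-duality isomorphisms along the inclusion $M_1\subset M$, using the explicit principal-annihilator description of $M_1$ to pin down the identification up to units. Once the splitting is in hand, $M\cong M_1\oplus(M/M_1)$, and $M/M_1$ inherits a length $n-1$ cyclic-filtration by quasi-Gorenstein submodules together with a full-rank presentation whose transpose-equivalence is inherited from that of $m$; induction then delivers the diagonal decomposition.
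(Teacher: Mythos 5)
Your overall architecture is sound and largely matches the paper: \((iii)\Leftrightarrow(iv)\) is the same direct translation, your \((i)\Rightarrow(ii)\) is the paper's Corollary~\ref{PDncor1}, and your \((iii)\Rightarrow(i)\) is essentially the paper's \((iv)\Rightarrow(i)\) (though you should note that the partial-sum filtration must be ordered so the annihilators are minimal at each stage; the paper does this explicitly by choosing \((\lambda_j)\) minimal in \(\mathcal{L}\) at each step). The genuine problem is in \((ii)\Rightarrow(iii)\), which carries all the content.

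You propose to peel off the \emph{bottom} piece: split \(0\to M_1\to M\to M/M_1\to 0\) and induct on \(M/M_1\). This runs into two related difficulties that you acknowledge but do not resolve, and that I do not think can be resolved along the route you sketch. First, to build your retraction you want \(M/M_1\) to be quasi-Gorenstein; but that is precisely what the splitting would give you, so the argument is circular as stated. The long exact \(\Ext\)-sequence only gives \(\Hom(M/M_1,R)=0\) and an injection \(\Ext^1(M/M_1,R)\hookrightarrow M\); it does \emph{not} hand you \(\Ext^2(M/M_1,R)=0\) nor \(\Ext^1(M/M_1,R)\cong M/M_1\) without more work. Second, even granting \(M\cong M_1\oplus(M/M_1)\), the induced filtration \(M_i/M_1\) of \(M/M_1\) need not consist of quasi-Gorenstein modules: these are quotients, and quasi-Gorensteinness does not pass to quotients; deducing it from \(\Ext^1(M_i,R)\cong M_i\) and \(\Ext\) commuting with direct sums would require a Krull--Schmidt-type cancellation that a general Noetherian factorial domain does not provide.

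The paper sidesteps both issues by peeling off the \emph{top} instead. Lemma~\ref{FSMCFl1} splits \(0\to M'\to M\to R/I\to 0\) where \(M'\) is the penultimate term of the filtration (hence already known to be perfect/quasi-Gorenstein) and the quotient is cyclic with \(I\) principal by factoriality. The splitting is proved not by exhibiting a retraction but by contradiction: if the sequence were not split, a careful chase through the duals and an auxiliary sequence \(0\to R/\Ann(m)\to M\to K\to 0\) produces \(\Ext^2_R(K,R)\cong I/\Ann(m)\cong R/(\Ann(m):_RI)\); since \(I\) and \(\Ann(m)\) are principal in the factorial domain \(R\), this module has only height-one associated primes, which is impossible for a nonzero \(\Ext^2\). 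Proposition~\ref{FSMCFp1} then inducts on the truncated filtration \(0\subsetneq M_1\subsetneq\cdots\subsetneq M_{n-1}\), which is again a minimal cyclic-filtration of the \emph{submodule} \(M_{n-1}\) consisting of quasi-Gorenstein \emph{submodules} — so no quotient-side hypotheses need to be re-verified. If you want to salvage your inductive scheme, you should redirect it to peel from the top and incorporate an argument like the paper's height-of-associated-primes contradiction to obtain the splitting; the naturality-of-self-duality retraction you outline is not, by itself, going to close the gap.
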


With regard to the rest of this paper, Section 2 will deal with preliminaries including definitions and motivation, and Section 3 will include the proof of the main result.

\section{Preliminaries}

In this section we will give definitions and properties that will be useful in proving our results in the next section.  More information about the definitions and results which are not new can be found in \cite{AuslanderBridgerSMT, EisenbudCA1995, SerreBook1965}.\newline

Let \(\text{grade}(M)=\inf\{i|\Ext_R^i(M,R)\neq 0\}.\)  The grade of \(M\) is also known as the codimension of the module \(M.\)  In fact, one has \(\dim(M)+\codim(M)=\dim(R)\) in a Cohen-Macaulay ring.  In \cite{Fossum1970} properties of modules of a certain grade is discussed, generalizing results of Auslander.  The focus of \cite{Fossum1970} is on the interplay between the functors \(\Ext^i_R(-,R)\) and the modules.  In \cite{Nagelliaison} quasi-Gorenstein modules are defined to facilitate a generalization of ideal linkage to module linkage. These two papers \cite{Fossum1970, Nagelliaison} provided the inspiration for this paper.

\begin{defn} A finitely generated \(R\)-module \(M\) of finite projective dimension is {\bf quasi-Gorenstein} if the following hold:

\bigskip

\begin{itemize}
\item[(i)] \(\pd(M)=\text{grade}(M)\)
\bigskip
\item[(ii)] \(\Ext^{\pd(M)}_R(M,R)\cong M\)
\end{itemize}

\end{defn}  
 
These are given this name as \(R/I\) is quasi-Gorenstein if and only if \(I\) is a Gorenstein ideal.  Also, Grassi defines \textit{Koszul Modules} \cite{GrassiKoszulM} which are quasi-Gorenstein modules with certain types of free resolutions. Furthermore, a \textit{Gorenstein algebra} is a quasi-Gorenstein module and there is a collection of work about such algebras  \cite{BohningGAlgebrasKoszul,BohningGAlgebrasSzpiro,FlennerVogelUlrichJoins,KleimanUlrichGAlgebras}. \newline

A module $M$ is said to be perfect if $\text{grade}(M)=\pd(M)$.  Quasi-Gorenstein modules are clearly perfect.  In fact, such modules are Cohen-Macaulay (as perfect modules are).  Quasi-Gorenstein modules have many nice properties (aside from those inherited by being Cohen-Macaulay).  For more results and properties of quasi-Gorenstein modules see \cite{YBFossum}.  The following results will be useful in proofs later on.  They follow directly from the definition.  

\begin{prop}[(see \cite{B-H} Exercise 1.4.26)]\label{PDnprop1} Suppose that \(M\) is an \(R\)-module of projective dimension \(n\) which has a projective resolution, \(\mathcal{P}\).  Then \(M\) is quasi-Gorenstein if and only if \(\mathcal{P}\) and \(\mathcal{P}^*=\Hom_R(\mathcal{P},R)\) are homotopy equivalent up to shift, i.e.\ \(\mathcal{P}^*\) is a projective resolution of \(M.\)
\end{prop}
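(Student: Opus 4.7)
The plan is to interpret the quasi-Gorenstein condition directly in terms of the cohomology of the dualized resolution. If $\mathcal{P}$ is the resolution
\[
0 \to P_n \to P_{n-1} \to \cdots \to P_0 \to 0,
\]
then $\mathcal{P}^* = \Hom_R(\mathcal{P},R)$ is a complex of projective $R$-modules concentrated in cohomological degrees $0$ through $n$, and its cohomology computes $\Ext^i_R(M,R)$ by the standard identification. The whole proof reduces to comparing that cohomology with the two clauses of Definition 2.1.

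For the forward direction, assume $M$ is quasi-Gorenstein. By (i), $\text{grade}(M) = n = \pd(M)$, which forces $\Ext^i_R(M,R) = 0$ for $i < n$; thus $\mathcal{P}^*$ is acyclic in degrees below $n$. By (ii), $\Ext^n_R(M,R) \cong M$, so augmenting by this isomorphism yields
\[
0 \to P_0^* \to P_1^* \to \cdots \to P_n^* \to M \to 0,
\]
which is a projective resolution of $M$. Since any two projective resolutions of the same module are homotopy equivalent, this shifted copy of $\mathcal{P}^*$ is homotopy equivalent to $\mathcal{P}$.

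Conversely, suppose $\mathcal{P}^*$ is a projective resolution of $M$ after the appropriate shift. Then $H^i(\mathcal{P}^*) = 0$ for $i < n$ and $H^n(\mathcal{P}^*) \cong M$. Under the identification $\Ext^i_R(M,R) = H^i(\mathcal{P}^*)$, this gives $\Ext^i_R(M,R) = 0$ for $i < n$ together with $\Ext^n_R(M,R) \cong M$. Thus $\text{grade}(M) \geq n$; combined with $\pd(M) = n$ this forces equality, and the isomorphism $\Ext^n_R(M,R) \cong M$ is precisely condition (ii), so $M$ is quasi-Gorenstein.

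I expect no real obstacle here; the argument is essentially bookkeeping around the identification of $\Ext$ with the cohomology of a dualized resolution and the uniqueness of projective resolutions up to homotopy. The only point requiring care is the direction of the shift and the automatic fact that each $P_i^*$ is itself projective, so that $\mathcal{P}^*[n]$ is a genuine projective complex eligible to serve as a resolution of $M$.
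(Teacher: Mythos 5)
Your proof is correct and takes the only sensible approach: the paper offers no written argument here (it merely remarks that the result ``follows directly from the definition'' and cites Bruns--Herzog Exercise 1.4.26), and your argument is exactly the expected unpacking of that remark, identifying $\Ext^i_R(M,R)$ with $H^i(\mathcal{P}^*)$ and invoking uniqueness of projective resolutions up to homotopy. One small clarification you may want to state explicitly: in the converse direction, the equality $\text{grade}(M)=n$ follows because $\Ext^n_R(M,R)\cong M\neq 0$ forces $\text{grade}(M)\le n$ (rather than from $\pd(M)=n$ alone, as your phrasing suggests).
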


The previous result gives us another method of determining when a module is quasi-Gorenstein.  We only need to know how a free or projective resolution of the module behaves with its dual.  

\begin{cor}\label{PDncor1}
Suppose that \(M\) is an \(R\)-module of projective dimension 1 presented by a full rank \(n\times n\)-matrix \(m.\)  Then \(m\) is equivalent to \(m^T\) if and only if \(M\) is quasi-Gorenstein.
\end{cor}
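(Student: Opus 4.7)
The plan is to read the corollary off the two-term resolution
\[
\mathcal F\colon\ 0\to R^n\xrightarrow{m} R^n\to M\to 0
\]
together with its $R$-dual. Applying $\Hom_R(-,R)$ yields
\[
0\to\Hom_R(M,R)\to R^n\xrightarrow{m^T} R^n\to\Ext^1_R(M,R)\to 0,
\]
and since $m$ has full rank $M$ is torsion, forcing $\Hom_R(M,R)=0$. Hence $\text{grade}(M)=\pd(M)=1$ automatically, and the quasi-Gorenstein condition of Definition 2.1 reduces to the single assertion $M\cong\Ext^1_R(M,R)$, equivalently $\coker(m)\cong\coker(m^T)$. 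For the forward implication, a relation $m^T=PmQ$ with $P,Q\in GL_n(R)$ is exactly the data of a commutative square between the two-term complexes of $\mathcal F$ and $\mathcal F^*[1]$ with vertical isomorphisms $Q^{-1}$ and $P$; passing to cokernels delivers $M\cong\Ext^1_R(M,R)$, and so $M$ is quasi-Gorenstein.

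For the converse I would invoke Proposition 2.2: the quasi-Gorenstein hypothesis says precisely that the shifted dual $\mathcal F^*[1]$, of shape $R^n\xrightarrow{m^T}R^n$, is itself a free resolution of $M$, hence chain homotopy equivalent to $\mathcal F$. I would fix an isomorphism $\theta\colon M\to\Ext^1_R(M,R)$ and lift it to a chain map $\phi$ with components $\phi_0,\phi_1\in M_n(R)$ satisfying $m^T\phi_1=\phi_0 m$; I would lift $\theta^{-1}$ to a homotopy inverse $\psi$, obtaining $\psi_0\phi_0=1+mh$ and $\phi_0\psi_0=1+m^T h'$ for some $h,h'\in M_n(R)$. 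Once $\phi_0$ and $\phi_1$ are shown to lie in $GL_n(R)$, the matrices $P=\phi_0$ and $Q=\phi_1^{-1}$ witness $m^T=PmQ$.

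The principal obstacle is precisely this last step: chain homotopy equivalence of complexes is strictly weaker than isomorphism of complexes in general, and the identities above only pin $\phi_0$ down modulo the images of $m$ and $m^T$. To clear it I would modify $\phi$ within its homotopy class by the null-homotopic adjustments $\phi_0\mapsto\phi_0+m^T k$, $\phi_1\mapsto \phi_1+km$ for a suitable $k\in M_n(R)$, exploiting that both $\mathcal F$ and $\mathcal F^*[1]$ carry the identical rank sequence $(n,n)$, so that no trivial complex $R\xrightarrow{1}R$ can split off from only one side. Equivalently, the mapping cone of $\phi$ is an exact sequence
\[
0\to R^n\xrightarrow{\binom{-m}{\phi_1}} R^{2n}\xrightarrow{(\phi_0,\,m^T)} R^n\to 0
\]
of free modules, and extracting the $GL_n(R)$ components of $\phi$ from the splitting of this sequence is the final technical hurdle.
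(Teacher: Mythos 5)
Your forward implication is fine, and the dualization setup for the converse is exactly the right framework (and is what Proposition~\ref{PDnprop1} encodes).  But as you yourself flag, the converse is not finished: you reduce to showing that a chain homotopy equivalence $\phi\colon\mathcal F\to\mathcal F^*[1]$ between the two length-one free complexes $R^n\xrightarrow{m}R^n$ and $R^n\xrightarrow{m^T}R^n$ can be replaced, within its homotopy class, by an honest isomorphism of complexes, and you leave that step as a ``final technical hurdle.''  That hurdle is the entire content of the converse, and the justification you offer for it --- that the two complexes have the same rank sequence $(n,n)$, so no trivial summand $R\xrightarrow{1}R$ can split off one side only --- is not by itself a proof.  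Equality of ranks only tells you that the contractible pieces in a decomposition $\mathcal F\oplus(\text{contractible})\cong\mathcal F^*[1]\oplus(\text{contractible})$ have matching ranks; it does not produce the matrix $k\in M_n(R)$ making $\phi_0+m^Tk$ (equivalently $\phi_1+km$) invertible, and over a non-local ring there is no Krull--Schmidt theorem for bounded complexes of free modules to fall back on.  Concretely, from the splitting of the mapping cone you obtain matrices $a,b,c,d$ with $a\phi_0-mc=1$ and $\phi_0a+m^Tb=1$, which only pin $\phi_0$ down modulo $\operatorname{im}(m)$ and $\operatorname{im}(m^T)$; passing from such ``two-sided almost-inverse'' identities to an actual unit $\phi_0+m^Tk$ is a genuine cancellation problem that your argument does not address.

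So the proposal is incomplete at exactly the point where all the work lies.  To close the gap you would need an actual argument that a homotopy equivalence between two length-one free resolutions of the same module, with equal ranks, is chain-homotopic to an isomorphism --- for instance by localizing at each prime, invoking minimality of free resolutions over local rings to get invertibility of $\phi_0$ there, and then patching, or by some other cancellation argument using the hypotheses on $R$.  (For comparison, the paper treats this corollary as an immediate consequence of Proposition~\ref{PDnprop1} and does not spell out the step you are stuck on either, so you are in good company in finding it less than ``direct''; but a blind proof cannot simply declare it a hurdle and stop.)
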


An example of a module of projective dimension greater than 1 that is quasi-Gorenstein is a complete intersection over regular local ring.  This follows as a free resolution of the complete intersection is the Koszul complex on the regular sequence that generates the ideal, and it is well known that this complex is self-dual.

\begin{prop}\label{PDnproper1}
Suppose that \(\{M_i\}_{i=1}^m\) are all quasi-Gorenstein \(R\)-modules of projective dimension \(n.\)  Then \(\oplus_{i=1}^m M_i\) is a quasi-Gorenstein \(R\)-module of projective dimension \(n.\)
\end{prop}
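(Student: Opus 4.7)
The plan is to verify the two defining conditions of a quasi-Gorenstein module directly for the direct sum $N := \bigoplus_{i=1}^m M_i$, leveraging the fact that $\Ext^j_R(-,R)$ commutes with finite direct sums in the first argument.

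First I would compute the projective dimension. Since projective dimension is additive over finite direct sums in the sense that $\pd(\bigoplus M_i) = \max_i \pd(M_i)$, and each $\pd(M_i) = n$ by hypothesis, we get $\pd(N) = n$. Next I would compute the grade. Using additivity of Ext,
\[
\Ext^j_R(N,R) \;\cong\; \bigoplus_{i=1}^m \Ext^j_R(M_i,R).
\]
For $j < n$, every summand vanishes because $\text{grade}(M_i) = n$, so $\Ext^j_R(N,R) = 0$; for $j = n$, the quasi-Gorenstein hypothesis gives $\Ext^n_R(M_i,R) \cong M_i$, so
\[
\Ext^n_R(N,R) \;\cong\; \bigoplus_{i=1}^m M_i \;=\; N,
\]
which is nonzero. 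This shows $\text{grade}(N) = n = \pd(N)$, giving condition (i), and the same computation is precisely condition (ii).

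Alternatively one could argue via Proposition \ref{PDnprop1}: if $\mathcal{P}_i$ is a projective resolution of $M_i$, then $\bigoplus_i \mathcal{P}_i$ is a projective resolution of $N$, and $\Hom_R(-,R)$ commutes with finite direct sums of finitely generated projectives, so $(\bigoplus_i \mathcal{P}_i)^* = \bigoplus_i \mathcal{P}_i^*$. Since each $\mathcal{P}_i^*$ is a resolution of $M_i$ up to shift by $n$, the direct sum is a resolution of $N$ up to the same shift, and Proposition \ref{PDnprop1} applies.

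There is no real obstacle here: the only point requiring any care is to observe that Ext (or $\Hom$) commutes with finite direct sums in the first (resp.\ second) argument, which is standard. The finiteness of the index set is essential — the argument would fail for infinite direct sums since the Ext comparison requires the direct sum to commute through a finitely generated projective, which is automatic only in the finite case.
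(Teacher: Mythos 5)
Your proof is correct, and since the paper offers no argument beyond the remark that this proposition ``follow[s] directly from the definition,'' the direct verification you give (additivity of $\pd$ and of $\Ext^j_R(-,R)$ over finite direct sums) is exactly the intended one. The alternative route via Proposition~\ref{PDnprop1} is a nice cross-check but adds nothing essential.
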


The following is a special case of a result in \cite{YBFossum}.  We show the proof to illustrate a technique that is useful in gaining information about the Ext modules of \(M\) or a quotient of \(M.\)

\begin{prop}\label{FSR1}
Suppose \(M\) is a quasi-Gorenstein \(R\)-module of projective dimension one and \(M'\)  a submodule of \(M.\)  Then \(\Ext_R^1(M/M',R)\) is isomorphic to a submodule of \(M.\)  Moreover, if \(M/M'\) is quasi-Gorenstein of projective dimension 1 then \(M/M'\) is isomorphic to a submodule of \(M\) and \(\Ass(M/M')\subset \Ass(M).\)
\end{prop}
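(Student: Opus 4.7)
The plan is to apply the contravariant functor $\Hom_R(-,R)$ to the short exact sequence
$$0 \to M' \to M \to M/M' \to 0$$
and extract the desired embedding from the resulting long exact sequence. The first key observation is that $\Hom_R(M,R)=0$: since $M$ is quasi-Gorenstein of projective dimension one, $\text{grade}(M)=1$, so $\ann_R(M)$ contains a regular element, and hence $M$ is a torsion $R$-module. Because $R$ is a domain, any homomorphism from a torsion module to $R$ must vanish. Applying the same reasoning to $M'\subset M$, which inherits the annihilating regular element of $M$, yields $\Hom_R(M',R)=0$ as well.

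With these two vanishings in hand, the long exact sequence
$$0\to \Hom_R(M/M',R)\to \Hom_R(M,R)\to \Hom_R(M',R)\to \Ext_R^1(M/M',R)\to \Ext_R^1(M,R)\to \Ext_R^1(M',R)$$
collapses to an injection $\Ext_R^1(M/M',R)\hookrightarrow \Ext_R^1(M,R)$, and the quasi-Gorenstein hypothesis on $M$ identifies the target with $M$ itself. This establishes the first assertion.

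For the moreover clause, assume that $M/M'$ is itself quasi-Gorenstein of projective dimension one. Then by definition $M/M'\cong \Ext_R^1(M/M',R)$, and composing this isomorphism with the embedding produced above realizes $M/M'$ as a submodule of $M$. The containment $\Ass(M/M')\subset \Ass(M)$ is then immediate from the standard fact that the associated primes of a submodule form a subset of the associated primes of the ambient module.

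I do not expect any genuinely hard step here; the only subtle point is the vanishing $\Hom_R(M',R)=0$, which is not automatic from the definition of a quasi-Gorenstein module but follows cleanly once one observes that the grade-one hypothesis forces $M$, and therefore every submodule of $M$, to be torsion over the domain $R$.
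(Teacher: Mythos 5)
Your proof is correct and follows essentially the same strategy as the paper: dualize the short exact sequence and read off the injection $\Ext_R^1(M/M',R)\hookrightarrow \Ext_R^1(M,R)\cong M$ from the vanishing of the degree-zero Hom terms. The one place you diverge is in justifying $\Hom_R(M',R)=0$: the paper invokes Fossum's Proposition~3(a) to conclude $\text{grade}(M')\geq\text{grade}(M)=1$, whereas you argue directly that $M$ is torsion (since a grade-one annihilator contains a regular element) and that submodules of torsion modules over a domain admit no nonzero maps to $R$; this is a perfectly valid, more self-contained substitute that avoids the external citation, and the rest of the argument matches the paper step for step.
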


\begin{proof}: Note that by \cite{Fossum1970} Proposition 3\((a)\) we have that

\[
\text{grade}(M')\geq \text{grade}(M)=1.
\]

The result then follows by dualizing the short exact sequence

\[
\begin{tikzcd}
0 \arrow[r] & M' \arrow[r] & M \arrow[r] & M/M' \arrow[r] & 0
\end{tikzcd}
\]

\noindent to get

\[
\begin{tikzcd}[column sep=3ex]
0 \arrow[r] & \Ext_R^1(M/M',R) \arrow[r] & \Ext_R^1(M,R) \arrow[r] & \Ext_R^1(M',R) \arrow[r] & \Ext_R^2(M/M',R) \arrow[r] & 0
\end{tikzcd}
\]

\noindent which shows that \(\Ext_R^1(M/M',R)\) is isomorphic to a submodule of \(M\cong \Ext^1_R(M,R).\)  Further, if \(M/M'\) is quasi-Gorenstein of projective dimension 1 then the sequence above shows that \(M/M'\) is isomorphic to a submodule of \(M.\)

\end{proof}

This result shows that the associated primes of a module and whether or not it is a quasi-Gorenstein module are intimately related.  It says that if the quotient of a module is quasi-Gorenstein then the associated primes of the quotient are among those of the quasi-Gorenstein module.  One way to guarantee this is to restrict ourselves to such modules.  We can continue this process of looking at quotients with the submodule and obtain a filtration of \(M.\)  Many types of filtrations exist in the literature and we define a type of filtration closely related to those of clean and pretty clean filtrations explored by Herzog and Popescu in \cite{HerPopFF}.  We will denote by \(\mathcal{L}(M)\) the lattice of ideals in \(R\) containing ideals of the form \(\Ann(x)\) for \(x\in M\) under inclusion.  

\begin{defn}\label{cyclicfiltrationdefn}
Let \(M\) be a finitely generated \(R\)-module and 
\[\mathcal{M}:0=M_0\subsetneq M_1\subsetneq M_2\subsetneq M_3\subsetneq \ldots \subsetneq M\]
be an increasing chain of submodules of \(M.\)  We say that \(\mathcal{M}\) is a \textbf{cyclic-filtration} of \(M\) if 
\[M_{i+1}/M_i\cong R/I_{i+1}\qquad \hbox{\rm with}~ I_{i+1} \in \mathcal{L}(M)\quad \hbox{\rm for}~ i=0,1,\ldots\]
\end{defn}

Since \(R\) is Noetherian, the length of a cyclic-filtration is finite.  We will say that a cyclic-filtration is a \textbf{minimal cyclic-filtration} if each module $M_i$ is a minimal submodule of \(M_{i+1}\) such that \(M_{i+1}/M_i\cong R/I_{i+1}\) is maximal for some \(I_{i+1}\in\mathcal{L}(M_{i+1}).\)  In other words, the filtration is minimal at $M_{i+1}$ if the annihilators of the quotients are as small as possible in \(\mathcal{L}(M_{i+1}).\)  In order to see the difference between these consider the module \(\mathbb{Z}/4\mathbb{Z}\) over \(\mathbb{Z}.\)  We have the obvious filtration

\[
0\subsetneq \mathbb{Z}/4\mathbb{Z}
\]

\noindent and the one by using the associated prime of \(\mathbb{Z}/4\mathbb{Z}\) which is

\[
0\subsetneq \mathbb{Z}/2\mathbb{Z} \subsetneq \mathbb{Z}/4\mathbb{Z}.
\]

These are both cyclic-filtrations of \(\mathbb{Z}/4\mathbb{Z},\) but only the first is a \textit{minimal} cyclic-filtration of \(\mathbb{Z}/4\mathbb{Z}.\)  Notice that the sequence

\[
\begin{tikzcd}
0 \arrow[r] & \mathbb{Z}/2\mathbb{Z} \arrow[r] & \mathbb{Z}/4\mathbb{Z} \arrow[r] & \mathbb{Z}/2\mathbb{Z} \arrow[r] & 0
\end{tikzcd}
\]

\noindent is not split.  We will see in the next section that having the filtration be minimal and consisting of quasi-Gorenstein submodules is enough to guarantee such a sequence splits.  Note that the matrix \((1.2)\) has no cyclic-filtration.

\section{Results}

In this section we present and prove the results directly used in proving the main theorem. The following is a key lemma to the proof for Theorem \ref{Rthm1}.

\begin{lem}\label{FSMCFl1}
Let \(R\) be a Noetherian factorial domain.  Suppose \(M\) is a quasi-Gorenstein \(R\)-module of projective dimension one and let \(\mathcal{M}:0\subsetneq M'\subsetneq M\) be a minimal cyclic-filtration of \(M\) where \(M'\) is a perfect \(R\)-module.  Then \(M\cong M'\oplus M/M'.\)
\end{lem}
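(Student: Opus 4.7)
The plan is to show that the short exact sequence $0 \to M' \to M \to M/M' \to 0$ splits, which gives $M \cong M' \oplus M/M'$ directly.

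First, I would identify the cyclic structures. Proposition \ref{FSR1} gives $\text{grade}(M') \geq 1$, and combined with perfectness, $\pd(M') = 1$; since $M'$ is cyclic (from the definition of cyclic-filtration) and rank-$1$ projective ideals over a UFD are principal, $M' \cong R/(h)$ for some $h \in R$. A parallel argument using the minimality hypothesis together with the dualized $\Ext$ sequence yields $M/M' \cong R/(g)$ with $g$ principal.

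Next, I would dualize the sequence by applying $\Hom(-,R)$. Using the vanishing of Hom on grade-$1$ modules, the isomorphism $\Ext^1(M,R) \cong M$ from quasi-Gorensteinness of $M$, the identity $\Ext^1(R/(f),R) \cong R/(f)$, and the vanishing $\Ext^2(R/(f),R) = 0$ for principal $(f)$, the long exact sequence collapses to a dualized short exact sequence
\[
0 \to N \to M \xrightarrow{\psi} M' \to 0,
\]
with $N := \Ext^1(M/M',R) \cong R/(g) \cong M/M'$ embedded in $M$ as $\ker\psi$. This $N$ is the natural candidate for splitting the original SES.

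The crucial remaining step is to show that the composition $N \hookrightarrow M \to M/M'$ is an isomorphism, equivalently that $N \cap M' = 0$ inside $M$. The minimal cyclic-filtration hypothesis is essential here: a nontrivial element $z \in N \cap M'$ would have $\text{Ann}(z)$ contained in both $(g)$ and $(h)$, and using the quasi-Gorenstein duality $M \cong \Ext^1(M,R)$ to translate between cyclic submodules and cyclic quotients, one would obtain a cyclic quotient of $M$ with annihilator strictly smaller than $(g)$, contradicting the minimality of $(g)$ in the realizable part of $\mathcal{L}(M)$. I expect this bridging step — translating the minimality condition (which directly constrains cyclic quotients) into information about the intersection $N \cap M'$ (which is a statement about submodules) — to be the main technical obstacle. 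Once $N \cap M' = 0$ is established, the identity $N + M' = M$ follows by length considerations at each associated prime of $M$ (all of height $1$, hence principal in the UFD $R$), giving $M = N \oplus M' \cong M/M' \oplus M'$.
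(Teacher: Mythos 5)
Your opening moves are sound and match the paper's Proposition~\ref{FSR1}: identifying $M'\cong R/(h)$ and $M/M'\cong R/(g)$ as cyclic with principal annihilators (both quasi\mbox{-}Gorenstein), dualizing the short exact sequence, and obtaining an embedded copy $N=\Ext_R^1(M/M',R)\cong R/(g)$ inside $M\cong\Ext_R^1(M,R)$. The gap is exactly where you flag it. Two concrete problems. First, the containment in your sketch is reversed: if $0\neq z\in N\cap M'$ with $N\cong R/(g)$ and $M'\cong R/(h)$, then $\Ann(z)$ \emph{contains} $(g)$ and $(h)$, so $\Ann(z)\supseteq(g)+(h)=(\gcd(g,h))$; it is not contained in them. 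This kills the stated path from a nonzero intersection to a cyclic quotient with annihilator strictly smaller than $(g)$. Second, and more fundamentally, the submodule $N$ depends on the choice of quasi\mbox{-}Gorenstein isomorphism $\varphi:\Ext_R^1(M,R)\to M$, and nothing in the proposal controls where $N$ lands relative to $M'$; without that, $N\cap M'=0$ is not forced even with minimality, and the complementary step $N+M'=M$ ``by length at associated primes'' is also only asserted. (The non-minimal filtration of $\mathbb{Z}/4\mathbb{Z}$ already shows that $N$ can coincide with $M'$, so some genuine work is needed here.)

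The paper avoids having to locate a complement at all. It argues by contradiction: if the sequence does not split, pick $m\in M$ lifting a generator of $M/M'\cong R/I$, so that $\Ann(m)\subsetneq I$, and form $0\to R/\Ann(m)\to M\to K\to 0$. Dualizing both this sequence and the original one, and chasing the resulting commutative ladders, the paper identifies $\Ext_R^2(K,R)\cong I/\Ann(m)\cong R/(\Ann(m):_R I)$. Since $I$ and $\Ann(m)$ are principal, $(\Ann(m):_R I)$ is principal, so over a factorial domain this quotient can only have associated primes of height one; but a nonzero $\Ext_R^2(K,R)$ would have associated primes of height at least two. Hence $\Ext_R^2(K,R)=0$, i.e.\ $\Ann(m)=I$, contradicting non-splitting. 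This is an $\Ext^2$-vanishing argument, not an intersection-plus-length argument, and it is the step you would need to substitute for your incomplete bridging claim.
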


\begin{proof} Consider the short exact sequence

\[
\begin{tikzcd}
0 \arrow[r] & M' \arrow[r] & M \arrow[r] & M/M' \arrow[r] & 0
\end{tikzcd}
\]

 Where \(M/M'\cong\ R/I\) for some height one ideal \(I\) of \(R.\) Note that \(I\) is principal as \(R\) is a factorial domain, and so \(R/I\) is a quasi-Gorenstein \(R\)-module.  Rewrite the above sequence as 
 
 \[
\begin{tikzcd}
(i): 0 \arrow[r] & M' \arrow[r, "\eta"] & M \arrow[r, "\lambda"] & R/I \arrow[r] & 0
\end{tikzcd}
\]

\noindent and suppose that this sequence is not split exact.  There exists \(m\in M\) such that \(\lambda(m)\) generates \(R/I.\)  Now \(\Ann(m)\subseteq I,\) but as the sequence is not split \(0\neq I\cdot m\subset \eta(M'),\) and so \(\Ann(m)\subsetneq I.\)  Then as \(R/\Ann(m)\) is a submodule of \(M\) we have the short exact sequence

\[
\begin{tikzcd}
(ii): 0 \arrow[r] & R/\Ann(m) \arrow[r, "\gamma"] & M \arrow[r, "\delta"] & K \arrow[r] & 0.
\end{tikzcd}
\]

If we take the dual of \((i)\) we get

\[
\begin{tikzcd}
0 \arrow[r] & R/I \arrow[r, "\lambda^*"] & M \arrow[r, "\eta^*\varphi"] & \Ext_R^1(M',R) \arrow[r] & 0 
\end{tikzcd}
\]

\noindent where \(\varphi\) is the isomorphism between \(M\) and \(\Ext_R^1(M,R).\)  We get a commutative diagram

\[
\begin{tikzcd}[row sep=2ex]
\!\!\!\!\!\!\!\!0 \arrow[r] & R/I \arrow[dd, "\alpha"] \arrow[r, "\lambda^*"] & \Ext_R^1(M,R) \arrow[dd, "\varphi"] \arrow[r, "\eta^*"] & \Ext_R^1(M',R) \arrow[r] \arrow[dd, "\beta"] & 0\\
\!\!\!\!\!\!\!\!(*)~~~~~~~~~~~~~~~~\\
\!\!\!\!\!\!\!\!0 \arrow[r] & R/\Ann(m) \arrow[r, "\gamma"] & M \arrow[r, "\delta"] & K \arrow[r] & 0
\end{tikzcd}
\]

\noindent where \(\alpha\) and \(\beta\) are induced by universal properties as \(\delta\varphi\lambda^*=0.\)  Note that \(\alpha\) is an injection and \(\beta\) is a surjection by the Snake Lemma.  Taking a dual of \((*)\) we get a commutative diagram

\[
\begin{tikzcd}
& & & 0 \arrow[d] \\
& 0 \arrow[d] & 0 \arrow[d] & I/\Ann(m) \arrow[d] \\
0 \arrow[r] & \Ext_R^1(K,R) \arrow[r, "\delta^*"] \arrow[d, "\beta^*"] & \Ext_R^1(M,R) \arrow[r, "\gamma^*"] \arrow[d, "\varphi"] & R/\Ann(m) \arrow[r] \arrow[d, dashed, "\varepsilon"] &  \Ext_R^2(K,R) \arrow[r] & 0\\
0 \arrow[r] & M' \arrow[r, "\eta"] \arrow[d] & M \arrow[r, "\lambda"] \arrow[d] & R/I \arrow[r] \arrow[d] & 0\\
& \coker(\beta^*) \arrow[d] & 0 & 0\\
& 0
\end{tikzcd}
\]

Where the left square is commutative as it is the dual of the right square in \((*).\)  This induces the mapping \(\varepsilon\) which is a surjection.  We claim that \(\Ext_R^2(K,R)=0.\)  \(\Ext_R^2(K,R)\) is the cokernel of the mapping \(\gamma^*.\)  The image of $\gamma^*$ in $R/\Ann(m)$ is isomorphic to $R/I$ as it is the same as $\alpha(R/I)$ in $(*)$ and the image of $M$ through $\lambda$ is $R/I$.  Therefore $\Ext_R^2(K,R)=I/\Ann(m)$, As \(R\) is a factorial domain, if \(\Ext_R^2(K,R)\) is non zero, it only has associated primes of height at least two. This is a contradiction as \(I/\Ann(m)\cong R/(\Ann(m):_RI)\) and \((\Ann(m):_RI)\subset \Ann(m)\) is principal because both \(I\) and \(\Ann(m)\) are principal.  So since \(R\) is factorial, \(R/(\Ann(m):_RI)\) has only associated primes of height one.  Therefore we must have that \(\Ext_R^2(K,R)=0.\)  However, \(\Ext_R^2(K,R)=I/\Ann(M)\) and so $\Ann(M)=I$ a contradiction to the original assumption that \((i)\) is not split.  Therefore $(i)$ must be split.

\end{proof}

\begin{prop}\label{FSMCFp1}
Suppose \(M\) is a quasi-Gorenstein \(R\)-module of projective dimension one and

\[
\mathcal{M}:0=M_0\subsetneq M_1\subsetneq M_2\subsetneq \cdots\subsetneq M_{n-1} \subsetneq M_n=M
\]

\noindent is a minimal cyclic-filtration of \(M\) where \(M_i\) is a quasi-Gorenstein submodule of \(M\) for \(i=1,\ldots, n-1.\)  Then \(M\cong \bigoplus_{i=0}^{n-1} M_{i+1}/M_i.\)
\end{prop}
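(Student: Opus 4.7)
The natural approach is induction on the length $n$ of the filtration. The base case $n = 1$ is immediate, since then $M = M_1 \cong M_1/M_0$ is already cyclic. For the inductive step the plan is to split off the top quotient $M/M_{n-1} \cong R/I_n$ by establishing an isomorphism $M \cong M_{n-1} \oplus (M/M_{n-1})$, and then apply the inductive hypothesis to $M_{n-1}$.

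The key tool for splitting off the top is Lemma \ref{FSMCFl1}. Although that lemma is stated for a filtration $0 \subsetneq M' \subsetneq M$ in which $M'$ is itself cyclic, its proof uses only that the top quotient $M/M'$ is cyclic of the form $R/I$ with $I$ principal of height one and that $M'$ is perfect; cyclicity of $M'$ is never invoked. In our setting $M/M_{n-1} \cong R/I_n$ is cyclic by the cyclic-filtration hypothesis, and $M_{n-1}$ is perfect because it is quasi-Gorenstein, so the argument of Lemma \ref{FSMCFl1} applies verbatim to the short exact sequence $0 \to M_{n-1} \to M \to R/I_n \to 0$ and yields the direct-sum splitting $M \cong M_{n-1} \oplus R/I_n$.

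To invoke the inductive hypothesis on $M_{n-1}$, one must verify that the truncated chain $0 = M_0 \subsetneq M_1 \subsetneq \cdots \subsetneq M_{n-1}$ is a minimal cyclic-filtration of $M_{n-1}$ with each $M_i$ ($1 \leq i \leq n-2$) quasi-Gorenstein, and that $M_{n-1}$ itself is quasi-Gorenstein of projective dimension one. The cyclic-filtration conditions transfer transparently, since the quotients $M_{i+1}/M_i$ are unchanged and the minimality/maximality conditions at each step involve only the consecutive pair $(M_i, M_{i+1})$ and their intrinsic lattice $\mathcal{L}(M_{i+1})$. Quasi-Gorensteinness of $M_{n-1}$ is given. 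For the projective dimension, the splitting just established forces $\pd(M_{n-1}) \leq \pd(M) = 1$, while the quasi-Gorenstein identity combined with $\text{grade}(M_{n-1}) \geq \text{grade}(M) = 1$ (Proposition 3(a) of \cite{Fossum1970}) gives $\pd(M_{n-1}) \geq 1$. The induction then produces $M_{n-1} \cong \bigoplus_{i=0}^{n-2} M_{i+1}/M_i$, and combining this with $M \cong M_{n-1} \oplus (M/M_{n-1})$ yields $M \cong \bigoplus_{i=0}^{n-1} M_{i+1}/M_i$.

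The main obstacle is not computational but conceptual: one must recognize that the proof of Lemma \ref{FSMCFl1} goes through in the non-cyclic case $M' = M_{n-1}$. Without this extension the induction cannot proceed past $n = 2$, because $M_{n-1}$ typically fails to be cyclic as soon as $n \geq 3$. A secondary point is the bookkeeping that shows the inductive hypothesis genuinely applies to $M_{n-1}$, but this is a direct consequence of the local character of the definitions of cyclic-filtration and minimality.
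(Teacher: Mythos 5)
Your proof is correct and follows essentially the same route as the paper: induction on the filtration length, with the key step being that the splitting argument of Lemma~\ref{FSMCFl1} applies even when the perfect submodule $M'$ is not cyclic. The only cosmetic difference is ordering — you split $M \cong M_{n-1}\oplus R/I_n$ first and then invoke induction on $M_{n-1}$, whereas the paper first decomposes $M_{n-1}$ by induction and then appeals to ``the same argument as Lemma~\ref{FSMCFl1}'' to split the resulting sequence; both hinge on the identical observation about the lemma's proof, and your version is if anything slightly more careful in explicitly verifying that $M_{n-1}$ is quasi-Gorenstein of projective dimension one before applying the inductive hypothesis.
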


\begin{proof}  We prove this result by induction on \(n.\)  The case \(n=1\) is trivial as \(M_1=M=M_1/M_0\cong R/I\) for some \(I\in\mathcal{L}(M).\)  The case \(n=2\) is Lemma \ref{FSMCFl1}.  So suppose that \(n>2.\)  We claim that

\[
\mathcal{M}': 0=M_0\subsetneq M_1\subsetneq M_2\subsetneq\cdots\subsetneq M_{n-2}\subsetneq M_{n-1}
\]

\noindent is a minimal cyclic-filtration of \(M_{n-1}\).  Indeed it is a cyclic filtration as \(M_{i+1}/M_i\cong R/I_{i+1}\) with \(I_{i+1}\in\mathcal{L}(M_{i+1})\subset\mathcal{L}(M_{n-1})\) for 
\(i=0,\ldots, n-2.\)  It is minimal as each module \(M_i\) is a minimal submodule of \(M_{i+1}\) with \(M_{i+1}/M_i\cong R/I_{i+1}\) with \(I_{i+1}\in\mathcal{L}(M_{i+1}),\) 
for \(i=1,\ldots ,n-2.\) \newline
 
So by induction \(M_{n-1}\cong \bigoplus_{i=0}^{n-2}M_{i+1}/M_i\cong \bigoplus_{i=0}^{n-2}R/I_{i+1}.\)  Now \(M_{n-1}\) is a minimal submodule of \(M\) with \(M/M_{n-1}\cong R/I\) for \(I\in\mathcal{L}(M).\)  We have the following sequence

\[
\begin{tikzcd}
0 \arrow[r] & \bigoplus_{i=0}^{n-1}R/I_{i+1} \arrow[r, "\alpha"] & M \arrow[r, "\beta"] & R/I \arrow[r] & 0
\end{tikzcd}
\]

Using the same argument as that of Lemma \ref{FSMCFl1} we see that is split and \(M\cong \bigoplus_{i=0}^{n-1} M_{i+1}/M_i.\)

\end{proof}

This says that a quasi-Gorenstein module \(M\) of projective dimension 1 with a minimal cyclic-filtration of quasi-Gorenstein modules has a decomposition of the form 
\(M\cong \bigoplus_{i=1}^{n} R/(\lambda_i)\) for \(\lambda_i\in R\) nonzero.  So if \(M\) has a presentation as

\[
\begin{tikzcd}
0 \arrow[r] & R^n \arrow[r, "\Lambda"] & R^n \arrow[r] & M \arrow[r] & 0
\end{tikzcd}
\]

\noindent with \(\Lambda\) an \(n\times n\)-matrix with elements in \(R,\) then \(\Lambda\) is equivalent to the diagonal matrix \(\text{diag}(\lambda_1,\lambda_2,\ldots,\lambda_n),\) 
and \(M\) is a sum of sequences

\[
\begin{tikzcd}
0 \arrow[r] & R \arrow[r, "\cdot \lambda_i"] & R \arrow[r] & R/(\lambda_i) \arrow[r] & 0
\end{tikzcd}
\]

\noindent for \(i=1,\ldots, n.\) \newline

Now we can prove Theorem \ref{Rthm1}.

\begin{proof} {\em (of Theorem~\ref{Rthm1})} \newline
 \((i)\Leftrightarrow (ii)\) is Corollary \ref{PDncor1}.\newline
 \((iii)\Leftrightarrow(iv)\) is trivial.\newline
 \((i)\Rightarrow(iv)\) is Proposition \ref{FSMCFp1}.\newline

So we are left to prove $(iv)\Rightarrow (i)$.  We know that if $m$ is diagonalizable then $M$ is a quasi-Gorenstein module as $m$ is equivalent to $m^T$.  Next as $(iii)\Leftrightarrow (iv)$ we can take a decomposition of $M\cong \bigoplus_{i=1}^m R/(\lambda_i)$ for $\lambda_i\in R$.  Consider $\mathcal{L}(M)$ and choose $(\lambda_j)$ such that $(\lambda_j)$ is minimal among all $(\lambda_i)$ for $i=1,\ldots, m$.  Note that there may be more than one choice of such ideals.  Let $M_{m-1}=\bigoplus_{i=1, i\neq j}^{m} R/(\lambda_i)$.  Then $M_{m-1}\subsetneq M$ is a piece of a minimal cyclic-filtration of $M$.  It is clear that $M/M_{m-1}=R/(\lambda_j)$ and $M_{m-1}$ is minimal with this property by the choice of $(\lambda_j)$.  We repeat this process for $M_{m-1}$ in $\mathcal{L}(M_{m-1})$ to obtain a minimal submodule $M_{m-2}$ with $M_{m-1}/M_{m-2}= R/(\lambda_k)$ for some $k\in \{1,2,\ldots, m\}\minus\{j\}$.  Continuing in this fashion we obtain $M_1,M_2,\ldots, M_{m-1}$ and a cyclic-filtration of $M$
\begin{align*}
\mathcal{M}: 0=M_0\subsetneq M_1\subsetneq M_2\subsetneq\cdots\subsetneq M_{m-2}\subsetneq M_{m-1}\subsetneq M
\end{align*}

\noindent which is a minimal cyclic-filtration of \(M\) by the choice of each quotient \(M_{i+1}/M_i\) for \(i=0,\ldots ,m-1.\)  Note that each \(M_i\) is a quasi-Gorenstein module by Proposition \ref{PDnproper1} \((a)\) as it is a finite direct sum of quasi-Gorenstein modules \(R/(\lambda_i).\)  Thus \(\mathcal{M}\) is a minimal cyclic-filtration of \(M\) consisting of quasi-Gorenstein submodules of \(M.\)  This proves \((iv)\Rightarrow (i)\) and the Theorem is shown.
\end{proof}



 \bibliographystyle{elsarticle-num}





\end{document}